\newtheorem{theorem}{Theorem}[section]
\theoremstyle{definition}
\newtheorem{definition}[theorem]{Definition}
\newtheorem{acknowledgement}[theorem]{Acknowledgement}
\theoremstyle{remark}
\newtheorem{remark}[theorem]{Remark}
\numberwithin{equation}{section}
\begin{document}
\title[A new Generalized fractional derivative and integral]{A new
Generalized fractional derivative and integral}
\author[A. Akkurt, M.E. YILDIRIM and H. YILDIRIM]{Abdullah Akkurt$^{1,}$$%
^{\ast }$, M. Esra YILDIRIM$^{2}$ and H\"{u}seyin YILDIRIM$^{1}$ }
\address{$^{1}$ Department of Mathematics, Faculty of Science and Arts,
University of Kahramanmara\c{s} S\"{u}t\c{c}\"{u} \.{I}mam, 46100,
Kahramanmara\c{s}, Turkey.}
\email{abdullahmat@gmail.com; hyildir@ksu.edu.tr}
\address{$^{2}$ Department of Mathematics, Faculty of Science, University of
Cumhuriyet, 58140, Sivas, Turkey.}
\email{mesra@cumhuriyet.edu.tr}
\keywords{\textbf{\thanks{\textbf{2010 Mathematics Subject Classification }%
26A33, 26D10, 26D15.} }Fractional Calculus, Fractional derivative,
Conformable Fractional Integral.}
\date{03.03.2017\\
\indent$^{\ast }$ Corresponding author}

\begin{abstract}
In this article, we introduce a new general definition of fractional
derivative and fractional integral, which depends on an unknown kernel. By
using these definitions, we obtain the basic properties of fractional
integral and fractional derivative such as Product Rule, Quotient Rule,
Chain Rule, Roll's Theorem and Mean Value Theorem. We give some examples.
\end{abstract}

\maketitle

\setcounter{page}{1}


\section{Introduction}

The main aim of this paper is to introduced limit definition of the
derivative of a function which obeys classical properties including:
linearity, Product Rule, Quotient Rule, Chain Rule, Rolle's Theorem and Mean
Value Theorem.

Today, there are many fractional integral and fractional derivative
definitions such as Riemann-Liouville, Caputo, Gr\"{u}nwald-Letnikov,
Hadamard, Riesz. For these, please see \cite{Kilbas}, \cite{Katugampola1},
\cite{Samko}. For more information on the Fractional Calculus, please see (%
\cite{Akkurt}, \cite{Abdel}, \cite{iyiola}, \cite{Hammad}, \cite{Hammad1}).

Here, all fractional derivatives do not provide some properties such as
Product Rule, Quotient Rule, Chain Rule, Roll's Theorem and Mean Value
Theorem.

To overcome some of these and other difficulties, Khalil et al. \cite{Khalil}%
, came up with an interesting idea that extends the familiar limit
definition of the derivative of a function given by the following $T_{\alpha
}$%
\begin{equation}
T_{\alpha }\left( f\right) \left( t\right) =\lim_{\varepsilon \rightarrow 0}%
\frac{f\left( t+\varepsilon t^{1-\alpha }\right) -f\left( t\right) }{%
\varepsilon }.  \label{a}
\end{equation}

In \cite{Almeida}, Almeida et al. introduced limit definition of the
derivative of a function as follows,%
\begin{equation}
f^{\left( \alpha \right) }\left( t\right) =\lim_{\varepsilon \rightarrow 0}%
\frac{f\left( t+\varepsilon k\left( t\right) ^{1-\alpha }\right) -f\left(
t\right) }{\varepsilon }.  \label{b}
\end{equation}

Recently, in \cite{Katugampola} Katugampola introduced the idea of
fractional derivative%
\begin{equation}
D_{\alpha }\left( f\right) \left( t\right) =\lim_{\varepsilon \rightarrow 0}%
\frac{f\left( te^{\varepsilon t^{-\alpha }}\right) -f\left( t\right) }{%
\varepsilon }.  \label{c}
\end{equation}

\section{Generalized new fractional derivative}

In this paper, we introduce a new fractional derivative which is generalized
the results obtained in \cite{Almeida}, \cite{Katugampola}, \cite{Khalil}.

In this section we present the definition of the Generalized new fractional
derivative and introduce the Generalized new fractional integral. We
provided representations for the Product Rule, Quotient Rule, Chain Rule,
Roll's Theorem and Mean Value Theorem. Also, we give some applications.

\begin{definition}
\label{d1} Let $k:[a,b]\rightarrow
\mathbb{R}
$ be a continuous nonnegative map such that $k\left( t\right) ,$ $k^{\prime
}\left( t\right) \neq 0,$ whenever $t>a.$ Given a function $%
f:[a,b]\rightarrow
\mathbb{R}
\ $and $\alpha \in \left( 0,1\right) \ $a real, we say that the generalized
fractional derivative\ of $f$ of order $\alpha $ is defined by,%
\begin{equation}
D^{\alpha }\left( f\right) \left( t\right) :=\lim_{\epsilon \rightarrow 0}%
\frac{f\left( t-k\left( t\right) +k\left( t\right) e^{\varepsilon \frac{%
\left( k\left( t\right) \right) ^{-\alpha }}{k^{\prime }\left( t\right) }%
}\right) -f\left( t\right) }{\epsilon }  \label{1.8}
\end{equation}%
exist$.\ $If $f$ is $\alpha -$differentiable in some $\left( 0,a\right) ,\
\alpha >0,\ \lim\limits_{t\rightarrow 0^{+}}f^{\left( \alpha \right) }\left(
t\right) \ $exist, then define%
\begin{equation}
f^{\left( \alpha \right) }\left( 0\right) =\lim\limits_{t\rightarrow
0^{+}}f^{\left( \alpha \right) }\left( t\right) .  \label{1.9}
\end{equation}%
We can write $f^{\left( \alpha \right) }\left( t\right) $ for $D^{\alpha
}\left( f\right) \left( t\right) $ to denote the generalized fractional
derivatives of $f$ of order $\alpha $.
\end{definition}

\begin{remark}
When $k\left( t\right) =t\ $in (\ref{1.8}), it turns out to be the
definition for derivatives of a function,\ in \cite{Katugampola}.
\end{remark}

\begin{remark}
When $\alpha \rightarrow 1\ $and $k\left( t\right) =t\ $in (\ref{1.8}), it
turns out to be the classical definition for derivatives of a function,\ $%
f^{\left( \alpha \right) }\left( t\right) =f^{\prime }\left( t\right) .$
\end{remark}

\begin{theorem}
Let $f:[a,b]\rightarrow
\mathbb{R}
\ $be a$\ $differentiable function and $t>a.\ $Then, $f\ $is a $\alpha -$%
differentiable at $t\ $and%
\begin{equation*}
f^{\left( \alpha \right) }\left( t\right) =\frac{\left( k\left( t\right)
\right) ^{1-\alpha }}{k^{\prime }\left( t\right) }\frac{df}{dt}(t).
\end{equation*}%
Also, if $f^{\prime }$\ is continuous at $t=a,\ $then%
\begin{equation*}
f^{\left( \alpha \right) }\left( a\right) =k^{\prime }\left( a\right) \left(
k\left( a\right) \right) ^{1-\alpha }\frac{df}{dt}(a).
\end{equation*}
\end{theorem}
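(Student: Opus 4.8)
The plan is to recognise the limit (\ref{1.8}) as the value at $\varepsilon=0$ of the ordinary derivative of the composition $\varepsilon\mapsto f\bigl(t-k(t)+k(t)e^{\varepsilon(k(t))^{-\alpha}/k'(t)}\bigr)$, and then to evaluate it by the classical chain rule together with the elementary fact $\lim_{x\to 0}(e^{x}-1)/x=1$. First I would fix $t>a$ and note that, since $k$ is nonnegative with $k(t)\neq 0$, in fact $k(t)>0$, so $(k(t))^{-\alpha}$ is defined, and since $k'(t)\neq 0$ the number
\[
\beta:=\frac{(k(t))^{-\alpha}}{k'(t)}
\]
is a well-defined nonzero real. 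I would then introduce the increment function $\phi(\varepsilon):=t-k(t)+k(t)e^{\varepsilon\beta}$, and record that $\phi(0)=t$, that $\phi$ is differentiable with $\phi'(\varepsilon)=k(t)\beta e^{\varepsilon\beta}$, and that $\phi(\varepsilon)\to t$ with $\phi(\varepsilon)\neq t$ for every sufficiently small $\varepsilon\neq 0$ (because $e^{\varepsilon\beta}\neq 1$ there).

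The main step is then to rewrite, for such $\varepsilon$, the difference quotient in (\ref{1.8}) as
\[
\frac{f(\phi(\varepsilon))-f(t)}{\varepsilon}=\frac{f(\phi(\varepsilon))-f(t)}{\phi(\varepsilon)-t}\cdot\frac{\phi(\varepsilon)-t}{\varepsilon},
\]
and to pass to the limit in each factor. The first factor tends to $f'(t)$ by differentiability of $f$ at $t$ and continuity of $\phi$ at $0$; the second factor equals $k(t)\,(e^{\varepsilon\beta}-1)/\varepsilon$ and tends to $k(t)\beta=(k(t))^{1-\alpha}/k'(t)$. Multiplying the two limits yields $f^{(\alpha)}(t)=\dfrac{(k(t))^{1-\alpha}}{k'(t)}\dfrac{df}{dt}(t)$ --- which is also immediate from $D^{\alpha}(f)(t)=(f\circ\phi)'(0)=f'(\phi(0))\,\phi'(0)=f'(t)\,k(t)\beta$. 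For the value at $t=a$ I would invoke the boundary convention (\ref{1.9}), writing $f^{(\alpha)}(a)=\lim_{t\to a^{+}}f^{(\alpha)}(t)=\lim_{t\to a^{+}}\frac{(k(t))^{1-\alpha}}{k'(t)}\frac{df}{dt}(t)$ and evaluating this one-sided limit using the continuity of $f'$ at $a$ together with the behaviour of $k$ and $k'$ there.

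The step I expect to need the most care is the passage to the product form of the difference quotient: it requires $\phi(\varepsilon)\neq t$ for $\varepsilon$ near $0$, which is precisely where the standing hypotheses $k(t)\neq 0$ and $k'(t)\neq 0$ (for $t>a$) enter. The companion subtlety is the endpoint $t=a$, at which $k'(a)$ is permitted to vanish, so the factor $1/k'(t)$ need not stay bounded as $t\to a^{+}$; this is why the additional assumption that $f'$ be continuous at $a$ is imposed, and one should check that the constant appearing in the boundary formula is consistent with taking that limit.
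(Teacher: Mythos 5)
Your proof of the interior formula is correct and is essentially the paper's own argument, just executed more carefully: the paper Taylor-expands $e^{\varepsilon(k(t))^{-\alpha}/k'(t)}$, rewrites the argument of $f$ as $t+h$ with $h=\epsilon\frac{(k(t))^{1-\alpha}}{k'(t)}[1+O(\epsilon)]$, and changes variables to $h$, which is exactly your two-factor decomposition $\frac{f(\phi(\varepsilon))-f(t)}{\phi(\varepsilon)-t}\cdot\frac{\phi(\varepsilon)-t}{\varepsilon}$ in looser notation; your version has the advantage of making explicit where $k(t)\neq 0$ and $k'(t)\neq 0$ are used (to guarantee $\phi(\varepsilon)\neq t$ near $0$). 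The one point you rightly flagged but left unresolved is real: the limit $\lim_{t\to a^{+}}\frac{(k(t))^{1-\alpha}}{k'(t)}f'(t)$ produces $\frac{(k(a))^{1-\alpha}}{k'(a)}f'(a)$ (when $k$, $k'$ are continuous at $a$ and $k'(a)\neq 0$), not the stated $k'(a)(k(a))^{1-\alpha}f'(a)$, so the endpoint formula as printed cannot be obtained by your (or any) route and appears to be a misprint in the theorem; note that the paper's own proof silently omits the endpoint claim altogether.
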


\begin{proof}
From definition \ref{d1}, we have%
\begin{eqnarray*}
D^{\alpha }\left( f\right) \left( t\right) &=&\lim_{\epsilon \rightarrow 0}%
\frac{f\left( t-k\left( t\right) +k\left( t\right) e^{\varepsilon \frac{%
\left( k\left( t\right) \right) ^{-\alpha }}{k^{\prime }\left( t\right) }%
}\right) -f\left( t\right) }{\epsilon } \\
&& \\
&=&\lim_{\in \rightarrow 0}\frac{\left( t-k\left( t\right) +k\left( t\right) %
\left[ 1+\varepsilon \frac{\left( k\left( t\right) \right) ^{-\alpha }}{%
k^{\prime }\left( t\right) }+\frac{\left( \varepsilon \frac{\left( k\left(
t\right) \right) ^{-\alpha }}{k^{\prime }\left( t\right) }\right) ^{2}}{2!}%
+...\right] \right) -f\left( t\right) }{\varepsilon } \\
&& \\
&=&\lim_{\epsilon \rightarrow 0}\frac{f\left( t+\epsilon \frac{\left(
k\left( t\right) \right) ^{1-\alpha }}{k^{\prime }\left( t\right) }\left[
1+O\left( \epsilon \right) \right] \right) -f\left( t\right) }{\epsilon }.
\end{eqnarray*}%
Taking%
\begin{equation*}
h=\epsilon \frac{\left( k\left( t\right) \right) ^{1-\alpha }}{k^{\prime
}\left( t\right) }\left[ 1+O\left( \epsilon \right) \right]
\end{equation*}%
we have,%
\begin{eqnarray*}
D^{\alpha }\left( f\right) \left( t\right) &=&\lim_{\epsilon \rightarrow 0}%
\frac{f\left( t+h\right) -f\left( t\right) }{\frac{k^{\prime }\left(
t\right) \left( k\left( t\right) \right) ^{\alpha -1}h}{1+O\left( \epsilon
\right) }} \\
&& \\
&=&\frac{\left( k\left( t\right) \right) ^{1-\alpha }}{k^{\prime }\left(
t\right) }\frac{df}{dt}(t).
\end{eqnarray*}
\end{proof}

\begin{theorem}
If a function $f:[a,b]\rightarrow
\mathbb{R}
\ $is $\alpha -$differentiable at $a>0,\ \alpha \in \left( 0,1\right] ,\ $%
then $f$\ is continuous at $a.$
\end{theorem}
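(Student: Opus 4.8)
The plan is to argue directly from Definition~\ref{d1}, without invoking the differentiation formula of the previous theorem (which needs $f$ to be differentiable). Write
\[
\phi(\varepsilon):=a-k(a)+k(a)\,e^{\varepsilon (k(a))^{-\alpha}/k'(a)};
\]
for this expression to make sense — as it must, since $f$ is assumed $\alpha$-differentiable at $a$ — one needs $k(a)>0$ and $k'(a)\neq0$, and then $\phi$ is defined and smooth on a neighbourhood of $\varepsilon=0$ with $\phi(0)=a$.

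First I would record, for $\varepsilon\neq0$, the trivial identity
\[
f(\phi(\varepsilon))-f(a)=\frac{f(\phi(\varepsilon))-f(a)}{\varepsilon}\cdot\varepsilon ,
\]
and let $\varepsilon\to0$. By hypothesis the first factor on the right tends to the finite number $f^{(\alpha)}(a)$ and the second to $0$, so by the product rule for limits $\lim_{\varepsilon\to0}\bigl(f(\phi(\varepsilon))-f(a)\bigr)=0$, that is, $\lim_{\varepsilon\to0}f(\phi(\varepsilon))=f(a)$.

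The remaining step is to convert this statement about the composite $f\circ\phi$ into continuity of $f$ at $a$. Here I would observe that $\phi$ is continuous, $\phi(0)=a$, and $\phi'(0)=k(a)\,(k(a))^{-\alpha}/k'(a)=(k(a))^{1-\alpha}/k'(a)\neq0$; since $\phi'$ is continuous and nonzero at $0$ it keeps a constant sign near $0$, so $\phi$ is strictly monotone there and hence a homeomorphism of a (one-sided, if necessary) neighbourhood of $0$ onto a neighbourhood of $a$ in $[a,b]$, with continuous inverse $\phi^{-1}$ satisfying $\phi^{-1}(a)=0$. Composing the limit above with $\phi^{-1}$ yields $\lim_{t\to a}f(t)=\lim_{t\to a}f\bigl(\phi(\phi^{-1}(t))\bigr)=f(a)$, which is exactly the assertion. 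The case $\alpha=1$ requires no separate treatment, since nothing in the argument divides by $1-\alpha$.

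The only genuinely delicate point — everything else is bookkeeping — is this last change of variable $t=\phi(\varepsilon)$: one must be sure that letting $\varepsilon\to0$ really sweeps out a full (one-sided) neighbourhood of $a$, and this is precisely where the standing hypotheses $k(a)>0$, $k'(a)\neq0$, together with the continuity of $k$ and $k'$, are used. Without the local invertibility of $\phi$ near $0$, knowing merely that $f(\phi(\varepsilon))\to f(a)$ would not suffice to conclude that $f$ is continuous at $a$.
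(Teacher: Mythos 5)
Your argument is correct and follows essentially the same route as the paper: write $f(\phi(\varepsilon))-f(a)$ as the difference quotient times $\varepsilon$ and apply the product rule for limits. The only difference is that you carefully justify the final change of variables $t=\phi(\varepsilon)$ via the local invertibility of $\phi$ near $0$ (using $\phi'(0)=(k(a))^{1-\alpha}/k'(a)\neq 0$), a step the paper passes over silently with the substitution $h=\epsilon\,(k(t))^{1-\alpha}[1+O(\epsilon)]/k'(t)$; this added care is a genuine improvement, not a deviation.
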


\begin{proof}
Since%
\begin{equation*}
f\left( a-k\left( a\right) +k\left( a\right) e^{\varepsilon \frac{\left(
k\left( a\right) \right) ^{-\alpha }}{k^{\prime }\left( a\right) }}\right)
-f\left( a\right) =\tfrac{f\left( a-k\left( a\right) +k\left( a\right)
e^{\varepsilon \frac{\left( k\left( a\right) \right) ^{-\alpha }}{k^{\prime
}\left( a\right) }}\right) -f\left( a\right) }{\epsilon }\epsilon ,
\end{equation*}%
we have%
\begin{equation*}
\lim_{\epsilon \rightarrow 0}\left[ f\left( a-k\left( a\right) +k\left(
a\right) e^{\varepsilon \frac{\left( k\left( a\right) \right) ^{-\alpha }}{%
k^{\prime }\left( a\right) }}\right) -f\left( a\right) \right]
=\lim_{\epsilon \rightarrow 0}\tfrac{\left[ f\left( a-k\left( a\right)
+k\left( a\right) e^{\varepsilon \frac{\left( k\left( a\right) \right)
^{-\alpha }}{k^{\prime }\left( a\right) }}\right) -f\left( a\right) \right]
}{\epsilon }\lim_{\epsilon \rightarrow 0}\epsilon .
\end{equation*}%
Let $h=\epsilon \frac{\left( k\left( t\right) \right) ^{1-\alpha }}{%
k^{\prime }\left( t\right) }\left[ 1+O\left( \epsilon \right) \right] .\ $%
Then,%
\begin{equation*}
\lim_{h\rightarrow 0}\left[ f\left( a+h\right) -f\left( a\right) \right]
=D^{\alpha }\left( f\right) \left( a\right) .0
\end{equation*}%
and%
\begin{equation*}
\lim_{h\rightarrow 0}f\left( a+h\right) =f\left( a\right) .
\end{equation*}%
This completes the proof.
\end{proof}

\begin{theorem}
Let $\alpha \in \left( 0,1\right] $ and $f,g$ be $\alpha -$differentiable at
$a$ point $t>0$. Then,
\end{theorem}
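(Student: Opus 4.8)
The plan is to deduce every identity in the list from the representation formula established in the preceding theorem, namely $f^{(\alpha)}(t)=\dfrac{(k(t))^{1-\alpha}}{k'(t)}\,\dfrac{df}{dt}(t)$ for $t>a$, which converts each statement about $D^{\alpha}$ into the corresponding elementary rule of ordinary calculus. The one preliminary point to settle is that, at a point $t>0$ with $k(t),k'(t)\neq 0$, being $\alpha$-differentiable is equivalent to being (ordinarily) differentiable: the substitution $h=\epsilon\,\dfrac{(k(t))^{1-\alpha}}{k'(t)}\bigl[1+O(\epsilon)\bigr]$ used in that proof is a near-identity, hence invertible for small $\epsilon$, so the difference quotient in (\ref{1.8}) converges if and only if the ordinary one does, the two limits being related by the factor $\dfrac{(k(t))^{1-\alpha}}{k'(t)}$. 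Once this is in hand, the hypothesis that $f$ and $g$ are $\alpha$-differentiable at $t$ supplies $f'(t)$ and $g'(t)$, and the representation formula applies to both.

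First I would record the trivial cases: for a constant $f\equiv c$ one gets $D^{\alpha}(c)(t)=\dfrac{(k(t))^{1-\alpha}}{k'(t)}\cdot 0=0$, and for $\lambda,\mu\in\mathbb{R}$ the linearity $D^{\alpha}(\lambda f+\mu g)(t)=\lambda D^{\alpha}(f)(t)+\mu D^{\alpha}(g)(t)$ is immediate from linearity of $\tfrac{d}{dt}$. For the product rule I would write
\[
D^{\alpha}(fg)(t)=\frac{(k(t))^{1-\alpha}}{k'(t)}\,(fg)'(t)=\frac{(k(t))^{1-\alpha}}{k'(t)}\bigl(f'(t)g(t)+f(t)g'(t)\bigr),
\]
and then distribute the prefactor to recognise $g(t)D^{\alpha}(f)(t)+f(t)D^{\alpha}(g)(t)$. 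The quotient rule is the same computation applied to $(f/g)'=(f'g-fg')/g^{2}$, valid where $g(t)\neq 0$, giving $D^{\alpha}(f/g)(t)=\dfrac{g(t)D^{\alpha}(f)(t)-f(t)D^{\alpha}(g)(t)}{g(t)^{2}}$. A power rule $D^{\alpha}(t^{p})(t)=p\,t^{p-1}\,\dfrac{(k(t))^{1-\alpha}}{k'(t)}$, if included in the statement, follows in the same way from $\tfrac{d}{dt}t^{p}=pt^{p-1}$.

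An alternative, more self-contained route argues directly from (\ref{1.8}). Writing $u_{\epsilon}:=t-k(t)+k(t)e^{\varepsilon (k(t))^{-\alpha}/k'(t)}$, one has $u_{\epsilon}\to t$, and by the previous theorem ($\alpha$-differentiability implies continuity) also $g(u_{\epsilon})\to g(t)$; the usual add-and-subtract decomposition
\[
\frac{f(u_{\epsilon})g(u_{\epsilon})-f(t)g(t)}{\epsilon}=\frac{f(u_{\epsilon})-f(t)}{\epsilon}\,g(u_{\epsilon})+f(t)\,\frac{g(u_{\epsilon})-g(t)}{\epsilon}
\]
then yields the product rule as $\epsilon\to 0$, and the linearity and quotient statements are handled analogously.

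I expect the only genuine obstacle to be the preliminary equivalence/inversion step: one must check that the $O(\epsilon)$ term is harmless, i.e. that $\epsilon\mapsto h(\epsilon)$ is a local homeomorphism at $0$ with $h(\epsilon)/\epsilon\to (k(t))^{1-\alpha}/k'(t)$, so that limits may be transported in both directions. After that each item is a one-line consequence of the matching classical identity, and the only side conditions to carry along are $k(t),k'(t)\neq 0$ (so the prefactor is defined) and $g(t)\neq 0$ (for the quotient rule).
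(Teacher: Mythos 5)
Your proposal is correct, but it takes a genuinely different route from the paper. The paper proves each item directly from the limit definition (\ref{1.8}): linearity and constants from the definition, the power rule by expanding the exponential, the product and quotient rules by the add--and--subtract decomposition of the difference quotient (your ``alternative, more self-contained route'' is in fact the paper's primary one for items 4 and 5), and the chain rule by the substitution $h=\epsilon\frac{(k(t))^{1-\alpha}}{k'(t)}[1+O(\epsilon)]$. Your main route instead funnels everything through the representation $f^{(\alpha)}(t)=\frac{(k(t))^{1-\alpha}}{k'(t)}f'(t)$ and the corresponding classical rules; this is shorter and more uniform, but it requires the converse implication ($\alpha$-differentiable $\Rightarrow$ differentiable), which the paper's preceding theorem does not state --- it only goes from differentiability to $\alpha$-differentiability. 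You correctly identify this as the one point needing proof and justify it by noting that $\epsilon\mapsto k(t)\bigl(e^{\varepsilon (k(t))^{-\alpha}/k'(t)}-1\bigr)$ is a local diffeomorphism at $0$ with nonzero derivative, so the two difference quotients converge or diverge together; that argument is sound given $k(t),k'(t)\neq 0$. Two small points of comparison: you do not treat the chain rule (item 6 of the theorem), though it falls out of your reduction at once via $(f\circ g)'=f'(g)\,g'$; and your quotient rule carries the numerator $g(t)D^{\alpha}(f)(t)-f(t)D^{\alpha}(g)(t)$, which is what the add--and--subtract computation actually yields, whereas the paper's displayed item 5 (and the last line of its proof) has the two terms with reversed sign --- your version is the consistent one.
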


$1.\ D^{\alpha }\left( af+bg\right) \left( t\right) =aD^{\alpha }\left(
f\right) \left( t\right) +bD^{\alpha }\left( g\right) \left( t\right) ,\ $%
for all $a,b\in
\mathbb{R}
\ $(linearity)$.$

\bigskip

$2.D^{\alpha }\left( t^{n}\right) =\frac{\left( k\left( t\right) \right)
^{1-\alpha }}{k^{\prime }\left( t\right) }nt^{n-1}\ $for all $n\in
\mathbb{R}
.$

\bigskip

$3.\ D^{\alpha }\left( c\right) =0,\ $for all constant functions\ $f\left(
t\right) =c.$

\bigskip

$4.\ D^{\alpha }\left( fg\right) \left( t\right) =f\left( t\right) D^{\alpha
}\left( g\right) \left( t\right) +g\left( t\right) D^{\alpha }\left(
f\right) \left( t\right) \ $(Product Rule)$.$

\bigskip

$5.\ D^{\alpha }\left( \dfrac{f}{g}\right) \left( t\right) =\dfrac{f\left(
t\right) D_{\alpha }\left( g\right) \left( t\right) -g\left( t\right)
D_{\alpha }\left( f\right) \left( t\right) }{\left[ g\left( t\right) \right]
^{2}}\ $(Quotient Rule)$.$

\bigskip

$6.\ D^{\alpha }\left( f\circ g\right) \left( t\right) =\frac{\left( k\left(
t\right) \right) ^{1-\alpha }}{k^{\prime }\left( t\right) }f^{\prime }\left(
g\left( t\right) \right) D^{\prime }\left( g\right) \left( t\right) $ (Chain
rule).

\begin{proof}
Part (1) and (3) follow directly from the definition. Let us prove (2), (4),
(5) and (6) respectively. Now, for fixed $\alpha \in \left( 0,1\right] ,\
n\in
\mathbb{R}
\ $and $t>0,\ $we have%
\begin{eqnarray*}
D^{\alpha }\left( t^{n}\right) &=&\lim_{\epsilon \rightarrow 0}\frac{\left(
t-k\left( t\right) +k\left( t\right) e^{\varepsilon \frac{\left( k\left(
t\right) \right) ^{-\alpha }}{k^{\prime }\left( t\right) }}\right) ^{n}-t^{n}%
}{\epsilon } \\
&& \\
&=&\lim_{\epsilon \rightarrow 0}\frac{\left( t+\epsilon \frac{\left( k\left(
t\right) \right) ^{1-\alpha }}{k^{\prime }\left( t\right) }\left[ 1+O\left(
\epsilon \right) \right] \right) ^{n}-t^{n}}{\epsilon } \\
&& \\
&=&\frac{\left( k\left( t\right) \right) ^{1-\alpha }}{k^{\prime }\left(
t\right) }nt^{n-1}.
\end{eqnarray*}%
This completes proof of (2). Then, we shall prove (4). To this end, since $%
f,g$ are $\alpha -$differentiable at $t>0$, note that,%
\begin{eqnarray*}
&&D^{\alpha }\left( fg\right) \left( t\right) \\
&=&\lim_{\epsilon \rightarrow 0}\tfrac{f\left( t-k\left( t\right) +k\left(
t\right) e^{\varepsilon \frac{\left( k\left( t\right) \right) ^{-\alpha }}{%
k^{\prime }\left( t\right) }}\right) g\left( t-k\left( t\right) +k\left(
t\right) e^{\varepsilon \frac{\left( k\left( t\right) \right) ^{-\alpha }}{%
k^{\prime }\left( t\right) }}\right) -f\left( t\right) g\left( t\right) }{%
\epsilon } \\
&& \\
&=&\lim_{\epsilon \rightarrow 0}\left[ \tfrac{f\left( t-k\left( t\right)
+k\left( t\right) e^{\varepsilon \frac{\left( k\left( t\right) \right)
^{-\alpha }}{k^{\prime }\left( t\right) }}\right) g\left( t-k\left( t\right)
+k\left( t\right) e^{\varepsilon \frac{\left( k\left( t\right) \right)
^{-\alpha }}{k^{\prime }\left( t\right) }}\right) -f\left( t\right) g\left(
t-k\left( t\right) +k\left( t\right) e^{\varepsilon \frac{\left( k\left(
t\right) \right) ^{-\alpha }}{k^{\prime }\left( t\right) }}\right) }{%
\epsilon }\right. \\
&& \\
&&+\left. \tfrac{f\left( t\right) g\left( t-k\left( t\right) +k\left(
t\right) e^{\varepsilon \frac{\left( k\left( t\right) \right) ^{-\alpha }}{%
k^{\prime }\left( t\right) }}\right) -f\left( t\right) g\left( t\right) }{%
\epsilon }\right] \\
&& \\
&=&\lim_{\epsilon \rightarrow 0}\left[ \tfrac{f\left( t-k\left( t\right)
+k\left( t\right) e^{\varepsilon \frac{\left( k\left( t\right) \right)
^{-\alpha }}{k^{\prime }\left( t\right) }}\right) -f\left( t\right) }{%
\epsilon }g\left( t-k\left( t\right) +k\left( t\right) e^{\varepsilon \frac{%
\left( k\left( t\right) \right) ^{-\alpha }}{k^{\prime }\left( t\right) }%
}\right) \right] \\
&& \\
&&+f\left( t\right) \lim_{\epsilon \rightarrow 0}\tfrac{g\left( t-k\left(
t\right) +k\left( t\right) e^{\varepsilon \frac{\left( k\left( t\right)
\right) ^{-\alpha }}{k^{\prime }\left( t\right) }}\right) -g\left( t\right)
}{\epsilon } \\
&& \\
&=&D^{\alpha }\left( f\right) \left( t\right) \lim_{\epsilon \rightarrow 0}
\left[ g\left( t-k\left( t\right) +k\left( t\right) e^{\varepsilon \frac{%
\left( k\left( t\right) \right) ^{-\alpha }}{k^{\prime }\left( t\right) }%
}\right) \right] +f\left( t\right) D^{\alpha }\left( g\right) \left( t\right)
\\
&& \\
&=&g\left( t\right) D^{\alpha }\left( f\right) \left( t\right) +f\left(
t\right) D^{\alpha }\left( g\right) \left( t\right) .
\end{eqnarray*}%
Since $g$ is continuous at $t$, $\lim_{\epsilon \rightarrow 0}\left[ g\left(
t-k\left( t\right) +k\left( t\right) e^{\varepsilon k\left( t\right)
^{-\alpha }}\right) \right] =g\left( t\right) .$ This completes the proof of
(4). Next, we prove (5). Similarly,
\begin{eqnarray*}
&&D^{\alpha }\left( \frac{f}{g}\right) \left( t\right) \\
&=&\lim_{\epsilon \rightarrow 0}\frac{\frac{f\left( t-k\left( t\right)
+k\left( t\right) e^{\varepsilon \frac{\left( k\left( t\right) \right)
^{-\alpha }}{k^{\prime }\left( t\right) }}\right) }{g\left( t-k\left(
t\right) +k\left( t\right) e^{\varepsilon \frac{\left( k\left( t\right)
\right) ^{-\alpha }}{k^{\prime }\left( t\right) }}\right) }-\frac{f\left(
t\right) }{g\left( t\right) }}{\epsilon } \\
&& \\
&=&\lim_{\epsilon \rightarrow 0}\tfrac{f\left( t-k\left( t\right) +k\left(
t\right) e^{\varepsilon \frac{\left( k\left( t\right) \right) ^{-\alpha }}{%
k^{\prime }\left( t\right) }}\right) g\left( t\right) -f\left( t\right)
g\left( t-k\left( t\right) +k\left( t\right) e^{\varepsilon \frac{\left(
k\left( t\right) \right) ^{-\alpha }}{k^{\prime }\left( t\right) }}\right) }{%
\epsilon g\left( t-k\left( t\right) +k\left( t\right) e^{\varepsilon \frac{%
\left( k\left( t\right) \right) ^{-\alpha }}{k^{\prime }\left( t\right) }%
}\right) g\left( t\right) } \\
&& \\
&=&\lim_{\epsilon \rightarrow 0}\tfrac{f\left( t-k\left( t\right) +k\left(
t\right) e^{\varepsilon \frac{\left( k\left( t\right) \right) ^{-\alpha }}{%
k^{\prime }\left( t\right) }}\right) g\left( t\right) -f\left( t\right)
g\left( t\right) +f\left( t\right) g\left( t\right) -f\left( t\right)
g\left( t-k\left( t\right) +k\left( t\right) e^{\varepsilon \frac{\left(
k\left( t\right) \right) ^{-\alpha }}{k^{\prime }\left( t\right) }}\right) }{%
\epsilon g\left( t-k\left( t\right) +k\left( t\right) e^{\varepsilon \frac{%
\left( k\left( t\right) \right) ^{-\alpha }}{k^{\prime }\left( t\right) }%
}\right) g\left( t\right) } \\
&& \\
&=&\lim_{\epsilon \rightarrow 0}\frac{1}{g\left( t-k\left( t\right) +k\left(
t\right) e^{\varepsilon \frac{\left( k\left( t\right) \right) ^{-\alpha }}{%
k^{\prime }\left( t\right) }}\right) g\left( t\right) } \\
&& \\
&&\times \left[ \tfrac{f\left( t-k\left( t\right) +k\left( t\right)
e^{\varepsilon \frac{\left( k\left( t\right) \right) ^{-\alpha }}{k^{\prime
}\left( t\right) }}\right) -f\left( t\right) }{\epsilon }g\left( t\right)
-f\left( t\right) \tfrac{g\left( t\right) -g\left( t-k\left( t\right)
+k\left( t\right) e^{\varepsilon \frac{\left( k\left( t\right) \right)
^{-\alpha }}{k^{\prime }\left( t\right) }}\right) }{\epsilon }\right] \\
&& \\
&=&\dfrac{f\left( t\right) D^{\alpha }\left( g\right) \left( t\right)
-g\left( t\right) D^{\alpha }\left( f\right) \left( t\right) }{\left(
g\left( t\right) \right) ^{2}}.
\end{eqnarray*}%
We have implicitly assumed here that $f^{\left( \alpha \right) }$ and $%
g^{\left( \alpha \right) }$ exist and that $g\left( t\right) \neq 0.\ $%
Finally, we prove (6). We have from the definition that%
\begin{eqnarray*}
D^{\alpha }\left( f\circ g\right) \left( t\right) &=&\lim_{\epsilon
\rightarrow 0}\frac{\left( f\circ g\right) \left( t-k\left( t\right)
+k\left( t\right) e^{\varepsilon \frac{\left( k\left( t\right) \right)
^{-\alpha }}{k^{\prime }\left( t\right) }}\right) -\left( f\circ g\right)
\left( t\right) }{\epsilon } \\
&=&\lim_{\epsilon \rightarrow 0}\frac{\left( f\circ g\right) \left(
t+\epsilon \frac{\left( k\left( t\right) \right) ^{1-\alpha }}{k^{\prime
}\left( t\right) }\left[ 1+O\left( \epsilon \right) \right] \right) -\left(
f\circ g\right) \left( t\right) }{\epsilon }.
\end{eqnarray*}%
Let $h=\epsilon \frac{\left( k\left( t\right) \right) ^{1-\alpha }}{%
k^{\prime }\left( t\right) }\left[ 1+O\left( \epsilon \right) \right] \ $%
such that%
\begin{eqnarray*}
D^{\alpha }\left( f\circ g\right) \left( t\right) &=&\lim_{\epsilon
\rightarrow 0}\frac{\left( f\circ g\right) \left( t+\epsilon \frac{\left(
k\left( t\right) \right) ^{1-\alpha }}{k^{\prime }\left( t\right) }\left[
1+O\left( \epsilon \right) \right] \right) -\left( f\circ g\right) \left(
t\right) }{\epsilon } \\
&=&\lim_{h\rightarrow 0}\frac{\left( f\circ g\right) \left( t+h\right)
-\left( f\circ g\right) \left( t\right) }{\frac{k^{\prime }\left( t\right)
\left( k\left( t\right) \right) ^{\alpha -1}h}{1+O\left( \epsilon \right) }}.
\end{eqnarray*}%
Therefore, we have%
\begin{equation*}
D^{\alpha }\left( f\circ g\right) \left( t\right) =\frac{\left( k\left(
t\right) \right) ^{1-\alpha }}{k^{\prime }\left( t\right) }f^{\prime }\left(
g\left( t\right) \right) D^{\prime }\left( g\right) \left( t\right) .
\end{equation*}

This completes the proof of the theorem.
\end{proof}

Now, we will give the derivatives of some special functions.

\begin{theorem}
\label{thm1} Let $a,n\in
\mathbb{R}
\ $and $\alpha \in \left( 0,1\right] .\ $Then we have the following results.
\end{theorem}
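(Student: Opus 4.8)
The plan is to reduce every entry of the list to the characterization already established: for a differentiable $f$ one has $f^{(\alpha)}(t)=\dfrac{(k(t))^{1-\alpha}}{k'(t)}\dfrac{df}{dt}(t)$ for $t>a$, and $f^{(\alpha)}(a)=k'(a)(k(a))^{1-\alpha}\dfrac{df}{dt}(a)$ when $f'$ is continuous at $a$, together with the linearity, constant, product and chain rules from the preceding theorem. Each elementary function appearing in the list ($t^{n}$, $e^{at}$, $\sin at$, $\cos at$, and any logarithmic entry) is classically differentiable on the relevant subinterval of $(a,b]$, and by hypothesis $k(t)\neq 0$, $k'(t)\neq 0$ there, so the characterization applies directly and the whole statement collapses to a table of ordinary derivatives multiplied by the common factor $(k(t))^{1-\alpha}/k'(t)$.

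Concretely I would proceed item by item. For the power function, $\dfrac{d}{dt}t^{n}=nt^{n-1}$ gives $D^{\alpha}(t^{n})=\dfrac{(k(t))^{1-\alpha}}{k'(t)}\,nt^{n-1}$, which is just part $(2)$ of the previous theorem recalled for completeness. For the exponential, $\dfrac{d}{dt}e^{at}=ae^{at}$ yields $D^{\alpha}(e^{at})=\dfrac{(k(t))^{1-\alpha}}{k'(t)}\,ae^{at}$, and the two trigonometric cases are identical after differentiating $\sin at$ and $\cos at$. A logarithmic entry $D^{\alpha}(\ln t)$ is handled the same way through $\dfrac{d}{dt}\ln t=1/t$. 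For any composite or antiderivative-type entry built out of $k$ itself, I would instead invoke the chain rule, part $(6)$ of the previous theorem, to display the cancellation of the kernel factor explicitly; and for a stated endpoint value at $t=a$ I would apply the second half of the characterization theorem with the same elementary derivatives substituted in.

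The only point demanding attention — the main obstacle, such as it is — is checking the hypotheses at the places where the functions are singular or the kernel degenerates: $\ln t$ and negative or fractional powers force $t>0$, and the identity $f^{(\alpha)}(t)=(k(t))^{1-\alpha}f'(t)/k'(t)$ is valid only where $k'(t)\neq 0$. So I would first restrict to the subinterval of $(a,b]$ on which the function in question and $1/k'$ are simultaneously defined and differentiable, state each identity there, and dispatch the endpoint $t=a$ separately when the statement records a value for it. Beyond this bookkeeping nothing new is required, since on its domain the generalized derivative is a pointwise scalar multiple of the classical one, and every formula then follows by a routine differentiation.
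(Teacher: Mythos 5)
Your proposal is correct and matches the paper's (implicit) approach exactly: the paper gives no separate argument for this list, relying precisely on the earlier identity $f^{(\alpha)}(t)=\frac{(k(t))^{1-\alpha}}{k'(t)}\frac{df}{dt}(t)$ applied to each classically differentiable elementary function. Your additional care about domains of $\ln$ and of $1/k'$ is sound bookkeeping that the paper omits.
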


$1.\ D^{\alpha }\left( 1\right) =0,$

\bigskip

$2.\ D^{\alpha }\left( e^{ax}\right) =a\frac{\left( k\left( x\right) \right)
^{1-\alpha }}{k^{\prime }\left( x\right) }e^{ax},$

\bigskip

$3.\ D^{\alpha }\left( \sin (ax)\right) =a\frac{\left( k\left( x\right)
\right) ^{1-\alpha }}{k^{\prime }\left( x\right) }\cos (ax),$

\bigskip

$4.\ D^{\alpha }\left( \cos (ax)\right) =-a\frac{\left( k\left( x\right)
\right) ^{1-\alpha }}{k^{\prime }\left( x\right) }\sin (ax),$

\bigskip

$5.\ D^{\alpha }\left( \log _{a}bx\right) =\dfrac{1}{x}\frac{\left( k\left(
x\right) \right) ^{1-\alpha }}{k^{\prime }\left( x\right) }\frac{1}{\ln a},$

\bigskip

$6.\ D^{\alpha }\left( a^{bx}\right) =b\frac{\left( k\left( x\right) \right)
^{1-\alpha }}{k^{\prime }\left( x\right) }a^{bx}\ln a.$

When $\alpha =1\ $and $k\left( t\right) =t\ $in Theorem \ref{thm1}, it turns
out to be the classical derivatives of a function.

\begin{theorem}[Rolle's theorem for $\protect\alpha -$generalized Fractional
Differentiable functions]
\label{thm2} Let $a>0\ $and $f:[a,b]\rightarrow
\mathbb{R}
$ be a function with the properties that,
\end{theorem}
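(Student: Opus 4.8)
The plan is to mimic the classical proof of Rolle's theorem, using the Extreme Value Theorem together with a one-sided-limit argument adapted to the limit definition \eqref{1.8}. I will assume the hypotheses are the natural ones: $f$ is continuous on $[a,b]$, $f$ is $\alpha$-differentiable on $(a,b)$, and $f(a)=f(b)$; the conclusion to reach is that there exists $c\in(a,b)$ with $D^{\alpha}(f)(c)=0$. First, since $f$ is continuous on the compact interval $[a,b]$, it attains its maximum $M$ and minimum $m$ there. If $M=m$ then $f$ is constant, so $D^{\alpha}(f)(c)=0$ for every $c\in(a,b)$ by part (3) of the earlier theorem on derivatives of constants, and we are done. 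Otherwise, since $f(a)=f(b)$, at least one of the two extreme values is attained at an interior point $c\in(a,b)$; without loss of generality assume $f(c)=M$ is a maximum (the minimum case is symmetric, or replace $f$ by $-f$ and invoke linearity).

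Next I would analyze the auxiliary function $\phi(\epsilon):=c-k(c)+k(c)\,e^{\varepsilon(k(c))^{-\alpha}/k'(c)}$ that appears inside the difference quotient. We have $\phi(0)=c$, and, by the expansion already carried out in the proof of the first theorem of this section, $\phi(\epsilon)=c+\epsilon\,\dfrac{(k(c))^{1-\alpha}}{k'(c)}\bigl[1+O(\epsilon)\bigr]$. Since $c>a$, the hypotheses on $k$ guarantee $k(c)>0$ and $k'(c)\neq 0$, so the coefficient $(k(c))^{1-\alpha}/k'(c)$ is a fixed nonzero number; hence for all $\epsilon$ with $|\epsilon|$ small enough, $\phi(\epsilon)\in(a,b)$ and $\epsilon\mapsto\phi(\epsilon)$ is strictly monotone near $0$, so $\phi(\epsilon)-c$ keeps one fixed sign for $\epsilon>0$ and the opposite sign for $\epsilon<0$. (Say $\phi(\epsilon)>c$ for $\epsilon>0$ and $\phi(\epsilon)<c$ for $\epsilon<0$ when the coefficient is positive; the reversed case is handled identically by flipping the signs below.)

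Now, because $c$ is an interior maximum of $f$, we have $f(\phi(\epsilon))-f(c)\le 0$ for all sufficiently small $\epsilon$. Dividing by $\epsilon$ and letting $\epsilon\to 0^{+}$ gives $D^{\alpha}(f)(c)\le 0$, while letting $\epsilon\to 0^{-}$ gives $D^{\alpha}(f)(c)\ge 0$; since $f$ is $\alpha$-differentiable at $c$, the two one-sided limits exist and both equal $D^{\alpha}(f)(c)$, which therefore must be $0$. This yields the desired $c\in(a,b)$, completing the argument.

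The only genuinely delicate point is the bookkeeping in the middle step: one must be sure that $\epsilon\mapsto\phi(\epsilon)$ is strictly monotone in a neighborhood of $\epsilon=0$, so that the inequality $f(\phi(\epsilon))\le f(c)$ translates faithfully into sign conditions on the difference quotient; this rests precisely on the standing assumptions $k(c)\neq 0$, $k'(c)\neq 0$ together with continuity of $k$, and on $c$ being strictly greater than $a$ (so that the $\alpha$-derivative is defined there). Everything else is the standard Rolle machinery. I would also add the remark that, if one additionally assumes $f$ differentiable, the theorem is immediate from classical Rolle's theorem and the identity $f^{(\alpha)}(t)=\frac{(k(t))^{1-\alpha}}{k'(t)}\frac{df}{dt}(t)$ established earlier, since the scalar factor is nonzero on $(a,b)$.
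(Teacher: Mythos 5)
Your proof is correct and follows essentially the same route as the paper's: use continuity and $f(a)=f(b)$ to locate an interior extremum $c$, then force $D^{\alpha}(f)(c)=0$ by comparing the signs of the two one-sided limits of the difference quotient. Your write-up is in fact more complete than the paper's, which skips the constant-function case and never verifies the monotonicity of $\epsilon\mapsto c-k(c)+k(c)e^{\varepsilon (k(c))^{-\alpha}/k'(c)}$ near $\epsilon=0$ that underlies the sign argument.
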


1. $f$ is continuous on $[a,b],$

2. $f$ is a $\alpha $-differentiable{}on $\left( a,b\right) \ $for some $%
\alpha \in \left( 0,1\right) ,$

3. $f(a)=f(b).$

Then, there exist $c\in \left( a,b\right) ,\ $such that $D^{\alpha }\left(
f\right) \left( c\right) =0.$

\begin{proof}
We will prove this theorem by using contradiction. Since $f$ is continuous
on $[a,b]$ and $f(a)=f(b)$, there is $c\in \left( a,b\right) $ at which the
function has a local extrema. Then,%
\begin{equation*}
D^{\alpha }\left( f\right) \left( c\right) =\lim_{\epsilon \rightarrow 0^{-}}%
\tfrac{\left[ f\left( c-k\left( c\right) +k\left( c\right) e^{\varepsilon
\frac{\left( k\left( c\right) \right) ^{-\alpha }}{k^{\prime }\left(
c\right) }}\right) -f\left( c\right) \right] }{\epsilon }=\lim_{\epsilon
\rightarrow 0^{+}}\tfrac{\left[ f\left( c-k\left( c\right) +k\left( c\right)
e^{\varepsilon \frac{\left( k\left( c\right) \right) ^{-\alpha }}{k^{\prime
}\left( c\right) }}\right) -f\left( c\right) \right] }{\epsilon }.
\end{equation*}%
But, the two limits have opposite signs. Hence, $D^{\alpha }\left( f\right)
\left( c\right) =0.$
\end{proof}

When $\alpha =1\ $and $k\left( t\right) =t\ $in Theorem \ref{thm2}, it turns
out to be the classical Rolles's Theorem.

\begin{theorem}[Mean value theorem for Generalized fractional differentiable
functions]
Let $\alpha \in (0,1]$ and $f:[a,b]\rightarrow
\mathbb{R}
$ be a continuous on $[a,b]$ and an $\alpha $-generalized fractional
differentiable mapping on $\left( a,b\right) $ with $0\leq a<b.\ $Let $%
k:[a,b]\rightarrow
\mathbb{R}
$ be a continuous nonnegative map such that $k\left( t\right) ,$ $k^{\prime
}\left( t\right) \neq 0.$ Then, there exists $c\in (a,b)$, such that%
\begin{equation}
D^{\alpha }\left( f\right) \left( c\right) =\frac{f(b)-f(a)}{\frac{k^{\alpha
}\left( b\right) }{\alpha }-\frac{k^{\alpha }\left( a\right) }{\alpha }}.
\label{m1}
\end{equation}
\end{theorem}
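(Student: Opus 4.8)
The plan is to reduce to Rolle's theorem (Theorem~\ref{thm2}), exactly as in the classical derivation of the mean value theorem from Rolle's theorem. The whole argument hinges on one preliminary observation: the map $\varphi(x):=\dfrac{k^{\alpha}(x)}{\alpha}$ is $\alpha$-generalized fractional differentiable on $(a,b)$ and $D^{\alpha}(\varphi)(x)=1$ there. Indeed, since $k$ is differentiable and $k'(x)\neq 0$ for $x>a$, the function $\varphi$ is an ordinary differentiable function with $\varphi'(x)=k^{\alpha-1}(x)\,k'(x)$, so by the representation $f^{(\alpha)}(t)=\frac{(k(t))^{1-\alpha}}{k'(t)}\,f'(t)$ proved earlier for differentiable $f$,
\[
D^{\alpha}(\varphi)(x)=\frac{\bigl(k(x)\bigr)^{1-\alpha}}{k'(x)}\,\varphi'(x)=\frac{\bigl(k(x)\bigr)^{1-\alpha}}{k'(x)}\cdot k^{\alpha-1}(x)\,k'(x)=1 .
\]
(Equivalently this comes straight from Definition~\ref{d1} by the same Taylor expansion of $e^{\varepsilon (k(x))^{-\alpha}/k'(x)}$ used there.) Note also that the denominator in (\ref{m1}) is nonzero: $k$ is continuous on $[a,b]$ with $k'\neq 0$ on $(a,b)$, hence strictly monotone, so $k(a)\neq k(b)$, and since $k\geq 0$ and $s\mapsto s^{\alpha}$ is injective on $[0,\infty)$ we get $k^{\alpha}(a)\neq k^{\alpha}(b)$, i.e.\ $\varphi(a)\neq\varphi(b)$.

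Then I would introduce the auxiliary function
\[
g(x):=f(x)-f(a)-\left(\frac{f(b)-f(a)}{\varphi(b)-\varphi(a)}\right)\bigl(\varphi(x)-\varphi(a)\bigr),\qquad x\in[a,b].
\]
It is continuous on $[a,b]$ because $f$ and $k$ are; it is $\alpha$-generalized fractional differentiable on $(a,b)$ by linearity of $D^{\alpha}$ together with the hypothesis on $f$ and the preliminary fact that $\varphi$ is $\alpha$-differentiable; and a direct check gives $g(a)=0$ and $g(b)=f(b)-f(a)-\bigl(f(b)-f(a)\bigr)=0$, so $g(a)=g(b)$. Hence $g$ satisfies all the hypotheses of Theorem~\ref{thm2} on $[a,b]$.

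By Rolle's theorem for $\alpha$-generalized fractional differentiable functions (Theorem~\ref{thm2}) there is a point $c\in(a,b)$ with $D^{\alpha}(g)(c)=0$. Using linearity of $D^{\alpha}$, the fact that the $\alpha$-derivative of a constant is $0$, and $D^{\alpha}(\varphi)\equiv 1$,
\[
0=D^{\alpha}(g)(c)=D^{\alpha}(f)(c)-\frac{f(b)-f(a)}{\varphi(b)-\varphi(a)},
\]
and solving for $D^{\alpha}(f)(c)$ yields exactly (\ref{m1}). The one step that is not routine bookkeeping is the preliminary identity $D^{\alpha}\!\bigl(k^{\alpha}/\alpha\bigr)=1$: it identifies $k^{\alpha}/\alpha$ as the natural ``primitive'' variable for the operator $D^{\alpha}$ and is therefore exactly the quantity one must subtract off so that $g$ inherits the endpoint condition $g(a)=g(b)$ required by Rolle's theorem; the rest is linearity and continuity.
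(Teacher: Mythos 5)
Your proposal is correct and follows essentially the same route as the paper: both construct an auxiliary function by subtracting the appropriate multiple of $k^{\alpha}(x)/\alpha$ from $f$ (yours differs from the paper's $G$ only by an additive constant), both rest on the key identity $D^{\alpha}\bigl(k^{\alpha}/\alpha\bigr)=1$, and both conclude via the generalized Rolle's theorem. Your added check that the denominator $\frac{k^{\alpha}(b)}{\alpha}-\frac{k^{\alpha}(a)}{\alpha}$ is nonzero is a small point the paper leaves implicit.
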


\begin{proof}
Let $h$ be a constant. Consider the function,%
\begin{equation}
G\left( x\right) =f\left( x\right) +h\frac{k^{\alpha }\left( x\right) }{%
\alpha }.  \label{m2}
\end{equation}%
$G$ is continuous functions on $[a,b]\ $and integrable $\forall x\in \left(
a,b\right) $. Here, if we choose $G\left( a\right) =G\left( b\right) ,\ $%
then we have%
\begin{equation*}
f\left( a\right) +h\frac{k^{\alpha }\left( a\right) }{\alpha }=f\left(
b\right) +h\frac{k^{\alpha }\left( b\right) }{\alpha }.
\end{equation*}%
Thus,%
\begin{equation}
h=-\frac{f\left( b\right) -f\left( a\right) }{\frac{k^{\alpha }\left(
b\right) }{\alpha }-\frac{k^{\alpha }\left( a\right) }{\alpha }}.  \label{m3}
\end{equation}%
Using (\ref{m3}) in (\ref{m2}), it follows that%
\begin{equation}
G\left( x\right) =f\left( x\right) -\frac{f\left( b\right) -f\left( a\right)
}{\frac{k^{\alpha }\left( b\right) }{\alpha }-\frac{k^{\alpha }\left(
a\right) }{\alpha }}\frac{k^{\alpha }\left( x\right) }{\alpha }.  \label{m4}
\end{equation}%
\begin{eqnarray*}
D^{\alpha }\left( G\right) \left( x\right)  &=&D^{\alpha }\left( f\right)
\left( x\right) -\frac{f\left( b\right) -f\left( a\right) }{\frac{k^{\alpha
}\left( b\right) }{\alpha }-\frac{k^{\alpha }\left( a\right) }{\alpha }}%
D^{\alpha }\left( \frac{k^{\alpha }\left( x\right) }{\alpha }\right)  \\
&=&D^{\alpha }\left( f\right) \left( x\right) -\frac{f\left( b\right)
-f\left( a\right) }{\frac{k^{\alpha }\left( b\right) }{\alpha }-\frac{%
k^{\alpha }\left( a\right) }{\alpha }}\frac{\left( k\left( t\right) \right)
^{1-\alpha }}{k^{\prime }\left( t\right) }\frac{d}{dt}\left( \frac{k^{\alpha
}\left( x\right) }{\alpha }\right)  \\
&=&D^{\alpha }\left( f\right) \left( x\right) -\frac{f\left( b\right)
-f\left( a\right) }{\frac{k^{\alpha }\left( b\right) }{\alpha }-\frac{%
k^{\alpha }\left( a\right) }{\alpha }}.
\end{eqnarray*}%
Then, the function $g$ satisfies the condition of the generalized fractional
Rolle's theorem.\ Hence, there exist $c\in \left( a,b\right) ,$ such that $%
D^{\alpha }\left( G\right) \left( c\right) =0.$\ Using the fact that\ $%
D^{\alpha }\left( \frac{k^{\alpha }\left( x\right) }{\alpha }\right) =1$, we
have%
\begin{equation*}
f^{\left( \alpha \right) }\left( x\right) =\frac{f\left( b\right) -f\left(
a\right) }{\frac{k^{\alpha }\left( b\right) }{\alpha }-\frac{k^{\alpha
}\left( a\right) }{\alpha }}.
\end{equation*}%
Therefore, we get desired result.
\end{proof}

When $\alpha =1\ $and $k\left( t\right) =t\ $in Theorem \ref{thm2}, it turns
out to be the classical Mean Value Theorem.

\section{Generalized new fractional integral}

Now we introduce the generalized fractional integral as follows:

\begin{definition}[Generalized Fractional Integral]
\label{d2} Let $a\geq 0\ $and $t\geq a.\ $Also, let $f$ be a function
defined on $(a,t]$\ and $\alpha \in
\mathbb{R}
.\ $Let $k:[a,b]\rightarrow
\mathbb{R}
$ be a continuous nonnegative map such that $k\left( t\right) ,$ $k^{\prime
}\left( t\right) \neq 0.$\ Then, the $\alpha -$generalized fractional
integral of $f$ is defined by,%
\begin{equation*}
I^{\alpha }\left( f\right) \left( t\right) =\int\limits_{a}^{b}\frac{%
k^{\prime }\left( x\right) f\left( x\right) }{\left( k\left( x\right)
\right) ^{1-\alpha }}dx
\end{equation*}%
if the Riemann improper integral exist.
\end{definition}

\begin{theorem}[Inverse property]
Let $a\geq 0\ $and $\alpha \in (0,1).$Also, let $f$ be a continuous function
such that $I^{a}f$ exist. Let $k:[a,b]\rightarrow
\mathbb{R}
$ be a continuous nonnegative map such that $k\left( t\right) ,$ $k^{\prime
}\left( t\right) \neq 0.\ $Then, for all $t>a,$ we have%
\begin{equation*}
D^{a}\left[ I^{a}f\left( t\right) \right] =f\left( t\right) .
\end{equation*}
\end{theorem}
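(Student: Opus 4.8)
The plan is to reduce the statement to the Fundamental Theorem of Calculus together with the explicit formula
\[
D^{\alpha}(F)(t)=\frac{\left(k(t)\right)^{1-\alpha}}{k^{\prime}(t)}\,\frac{dF}{dt}(t)
\]
established in the first theorem of this section. First I would set
\[
F(t):=I^{\alpha}f(t)=\int_{a}^{t}\frac{k^{\prime}(x)\,f(x)}{\left(k(x)\right)^{1-\alpha}}\,dx ,
\]
reading the upper limit in Definition \ref{d2} as the running variable $t$. Since $f$ is assumed continuous and $k,k^{\prime}$ are continuous with $k(x)\neq 0$ for $x>a$, the integrand $x\mapsto k^{\prime}(x)f(x)\left(k(x)\right)^{\alpha-1}$ is continuous on $(a,b]$, so the improper integral defining $F$ converges and, by the classical Fundamental Theorem of Calculus, $F$ is differentiable for every $t>a$ with
\[
F^{\prime}(t)=\frac{k^{\prime}(t)\,f(t)}{\left(k(t)\right)^{1-\alpha}} .
\]

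Next I would invoke the representation theorem: because $F$ is an ordinary differentiable function at $t>a$, it is $\alpha$-differentiable there and $D^{\alpha}(F)(t)=\frac{\left(k(t)\right)^{1-\alpha}}{k^{\prime}(t)}F^{\prime}(t)$. Substituting the expression for $F^{\prime}(t)$ from the previous step, the factors $k^{\prime}(t)$ and $\left(k(t)\right)^{1-\alpha}$ cancel, giving $D^{\alpha}(F)(t)=f(t)$, that is, $D^{\alpha}\!\left[I^{\alpha}f(t)\right]=f(t)$, which is the assertion.

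The only genuinely delicate point is justifying that $F$ is differentiable in the ordinary sense at each $t>a$; this is precisely where the continuity of $f$ and of $k^{\prime}/k^{1-\alpha}$ on $(a,b]$ is used, and it is what permits an appeal to the classical Fundamental Theorem of Calculus rather than a weaker Lebesgue-point statement. Everything else — the cancellation of the kernel factors and the application of the representation theorem — is routine bookkeeping. I would also remark that the hypothesis $\alpha\in(0,1)$ enters only through the representation theorem, keeping $D^{\alpha}$ meaningful; the underlying algebraic identity $\frac{\left(k(t)\right)^{1-\alpha}}{k^{\prime}(t)}\cdot\frac{k^{\prime}(t)}{\left(k(t)\right)^{1-\alpha}}=1$ holds for every real $\alpha$.
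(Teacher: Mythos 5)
Your proposal is correct and follows essentially the same route as the paper: apply the representation formula $D^{\alpha}(F)(t)=\frac{(k(t))^{1-\alpha}}{k^{\prime}(t)}F^{\prime}(t)$ to $F=I^{\alpha}f$ and use the Fundamental Theorem of Calculus to compute $F^{\prime}$, after which the kernel factors cancel. You merely spell out the differentiability of $F$ (and the reading of the upper limit as the running variable $t$) more carefully than the paper does.
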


\begin{proof}
Since $f$ is continuous, then $I^{a}f\left( t\right) $ is clearly
differentiable. Hence,%
\begin{eqnarray*}
D^{a}\left[ I^{a}\left( f\right) \left( t\right) \right] &=&\frac{\left(
k\left( t\right) \right) ^{1-\alpha }}{k^{\prime }\left( t\right) }\frac{d}{%
dt}I^{a}(t) \\
&& \\
&=&\frac{\left( k\left( t\right) \right) ^{1-\alpha }}{k^{\prime }\left(
t\right) }\frac{d}{dt}\int\limits_{a}^{t}\frac{f\left( x\right) k^{\prime
}\left( x\right) }{\left( k\left( x\right) \right) ^{1-\alpha }}dx \\
&& \\
&=&\frac{\left( k\left( t\right) \right) ^{1-\alpha }}{k^{\prime }\left(
t\right) }\frac{f\left( t\right) k^{\prime }\left( t\right) }{\left( k\left(
t\right) \right) ^{1-\alpha }} \\
&& \\
&=&f\left( t\right) .
\end{eqnarray*}
\end{proof}

\begin{theorem}
\label{T2} Let $f:(a,b)\rightarrow
\mathbb{R}
$ be differentiable and $0<\alpha \leq 1$. Let $k:[a,b]\rightarrow
\mathbb{R}
$ be a continuous nonnegative map such that $k\left( t\right) ,$ $k^{\prime
}\left( t\right) \neq 0.\ $Then, for all $t>a$ we have%
\begin{equation}
I^{a}\left[ D^{a}\left( f\right) \left( t\right) \right] =f\left( t\right)
-f\left( a\right) .  \label{1.12}
\end{equation}
\end{theorem}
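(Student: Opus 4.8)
The plan is to reduce the statement to the ordinary fundamental theorem of calculus by using the representation of the generalized fractional derivative proved earlier. Since $f$ is differentiable on $(a,b)$, that theorem gives $D^{a}(f)(x)=\dfrac{(k(x))^{1-\alpha}}{k'(x)}\,f'(x)$ for every $x>a$. I would then insert this into the generalized fractional integral of Definition~\ref{d2} (with upper limit $t$), obtaining
\begin{equation*}
I^{a}\bigl[D^{a}(f)(t)\bigr]=\int_{a}^{t}\frac{k'(x)}{(k(x))^{1-\alpha}}\,D^{a}(f)(x)\,dx=\int_{a}^{t}\frac{k'(x)}{(k(x))^{1-\alpha}}\cdot\frac{(k(x))^{1-\alpha}}{k'(x)}\,f'(x)\,dx .
\end{equation*}
The standing hypothesis that $k(x)\neq 0$ and $k'(x)\neq 0$ for $x>a$ is precisely what makes the cancellation of the two factors legitimate, so the integrand collapses to $f'(x)$ and the fundamental theorem of calculus yields $\int_{a}^{t}f'(x)\,dx=f(t)-f(a)$, which is \eqref{1.12}.

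The only genuinely delicate point — and the step I expect to be the main obstacle — is the behaviour at the lower endpoint $x=a$: the kernel $k$ is only assumed nonvanishing for $t>a$, so $k(a)$ may be zero, and the integral in Definition~\ref{d2} is understood as an improper Riemann integral near $a$. I would therefore carry out the cancellation and the fundamental theorem on $[a+\delta,t]$ for $\delta>0$, getting $f(t)-f(a+\delta)$, and then let $\delta\to 0^{+}$. Passing to the limit is justified by the continuity of $f$ at $a$ (which follows from differentiability, by the continuity theorem of Section~2) together with the assumption, built into the statement, that the improper integral defining $I^{a}$ converges. If in addition one assumes $f'$ continuous on $[a,b]$, the integrand $\dfrac{k'(x)f'(x)}{(k(x))^{1-\alpha}}\cdot\dfrac{(k(x))^{1-\alpha}}{k'(x)}=f'(x)$ extends continuously to $[a,b]$, no limiting argument is needed, and the displayed one-line computation gives the result directly.
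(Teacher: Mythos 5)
Your proposal is correct and follows essentially the same route as the paper: substitute the formula $D^{\alpha}(f)(x)=\frac{(k(x))^{1-\alpha}}{k'(x)}f'(x)$ into the integral, cancel, and apply the fundamental theorem of calculus. Your additional care with the improper integral at the lower endpoint $x=a$ is a refinement the paper's proof silently skips, but it does not change the argument.
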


\begin{proof}
\begin{eqnarray*}
I^{a}\left[ D^{a}\left( f\right) \left( t\right) \right] &=&\int%
\limits_{a}^{t}\frac{k^{\prime }\left( x\right) }{\left( k\left( x\right)
\right) ^{1-\alpha }}D^{a}\left( f\right) (x)dx \\
&& \\
&=&\int\limits_{a}^{t}\frac{k^{\prime }\left( x\right) }{\left( k\left(
x\right) \right) ^{1-\alpha }}\frac{\left( k\left( x\right) \right)
^{1-\alpha }}{k^{\prime }\left( x\right) }\frac{df}{dx}(x)dx \\
&& \\
&=&\int\limits_{a}^{t}\frac{df}{dx}(x)dx \\
&& \\
&=&f\left( t\right) -f\left( a\right) .
\end{eqnarray*}
\end{proof}

\begin{theorem}
(\textbf{Integration by parts}) Let $f,g:[a,b]\rightarrow
\mathbb{R}
$ be two functions such that $fg$ is differentiable. Then%
\begin{equation*}
\int_{a}^{b}f\left( x\right) D^{\alpha }\left( g\right) \left( x\right)
d_{\alpha }x=\left. fg\right\vert _{a}^{b}-\int_{a}^{b}g\left( x\right)
D^{\alpha }\left( f\right) \left( x\right) d_{\alpha }x.
\end{equation*}
\end{theorem}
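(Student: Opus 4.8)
The plan is to obtain the formula by combining the Product Rule for $D^{\alpha}$ (item~(4) in the theorem listing the algebraic properties of the generalized derivative) with the Fundamental-Theorem identity of Theorem~\ref{T2}. Throughout I read the symbol $d_{\alpha}x$ as the weighted Riemann differential $\dfrac{k'(x)}{(k(x))^{1-\alpha}}\,dx$ of Definition~\ref{d2}, so that $\int_{a}^{b}F(x)\,d_{\alpha}x = I^{\alpha}(F)(b)$ for every admissible integrand $F$.

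First I would apply the Product Rule to $fg$: for all $x\in(a,b)$,
\begin{equation*}
D^{\alpha}(fg)(x)=f(x)\,D^{\alpha}(g)(x)+g(x)\,D^{\alpha}(f)(x).
\end{equation*}
Since $fg$ is differentiable on $[a,b]$, Theorem~\ref{T2} applied to the function $fg$ gives
\begin{equation*}
\int_{a}^{b}D^{\alpha}(fg)(x)\,d_{\alpha}x=I^{\alpha}\!\left[D^{\alpha}(fg)\right](b)=(fg)(b)-(fg)(a)=\left.fg\right|_{a}^{b}.
\end{equation*}
On the other hand, integrating the Product-Rule identity against $d_{\alpha}x$ over $[a,b]$ and using linearity of the Riemann integral yields
\begin{equation*}
\int_{a}^{b}D^{\alpha}(fg)(x)\,d_{\alpha}x=\int_{a}^{b}f(x)\,D^{\alpha}(g)(x)\,d_{\alpha}x+\int_{a}^{b}g(x)\,D^{\alpha}(f)(x)\,d_{\alpha}x.
\end{equation*}
Equating the last two displays and solving for $\int_{a}^{b}f\,D^{\alpha}(g)\,d_{\alpha}x$ gives the asserted identity.

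The substance of the argument is thus a two-line manipulation; the only points requiring care are notational and at the level of hypotheses. One must make explicit that $d_{\alpha}x$ is the weight of Definition~\ref{d2}, and one should verify that each of the integrands $f\,D^{\alpha}g$ and $g\,D^{\alpha}f$ is integrable against $k'/k^{1-\alpha}$, so that the linear splitting is legitimate and the improper integrals converge; this follows from the differentiability hypotheses together with continuity and nonvanishing of $k$ and $k'$. I expect that checking these integrability and convergence conditions --- rather than any conceptual step --- will be the main (if minor) obstacle. Alternatively, one can bypass Theorem~\ref{T2}: using the closed form $D^{\alpha}(g)(x)=\frac{(k(x))^{1-\alpha}}{k'(x)}\,g'(x)$ from the first theorem of Section~2, the weighted integral collapses to the ordinary $\int_{a}^{b}f(x)\,g'(x)\,dx$, and classical integration by parts together with the analogous identity for $D^{\alpha}(f)$ completes the proof.
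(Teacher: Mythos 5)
Your argument is correct, but it is worth noting that the paper itself gives no proof at all here: its entire ``proof'' is the single sentence that the proof is done in a similar way as in the reference \cite{Abdel}. Your two-step derivation --- apply the Product Rule (item (4) of the algebraic-properties theorem) to $fg$, integrate against the weight $\frac{k'(x)}{(k(x))^{1-\alpha}}\,dx$, and invoke Theorem \ref{T2} to evaluate $\int_a^b D^{\alpha}(fg)(x)\,d_{\alpha}x = (fg)(b)-(fg)(a)$ --- is exactly the standard route and is almost certainly what the cited reference does in its own setting; so you have in effect supplied the proof the paper omits. One point you could make sharper: the theorem as stated only assumes $fg$ is differentiable, yet both your proof and the formula itself require $f$ and $g$ to be individually $\alpha$-differentiable (the Product Rule needs both, and the conclusion features $D^{\alpha}f$ and $D^{\alpha}g$ separately); this is a defect of the paper's hypotheses rather than of your argument, but it deserves the explicit strengthening of assumptions that you only gesture at in your closing remarks. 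Your alternative route via the closed form $D^{\alpha}(g)(x)=\frac{(k(x))^{1-\alpha}}{k'(x)}g'(x)$, which collapses everything to classical integration by parts, is equally valid and arguably cleaner, since the weight cancels identically.
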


\begin{proof}
The proof is done in a similar way in \cite{Abdel}.
\end{proof}

\begin{theorem}
\label{T1} Let $f$ and $g$ be functions satisfying the following
\end{theorem}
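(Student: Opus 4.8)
I read the truncated statement of Theorem~\ref{T1} as the Cauchy-type mean value theorem for generalized fractional derivatives: if $f,g\colon[a,b]\to\mathbb{R}$ are continuous on $[a,b]$, $\alpha$-differentiable on $(a,b)$, and $D^{\alpha}(g)(t)\neq 0$ for all $t\in(a,b)$, then there exists $c\in(a,b)$ with
\[
\frac{D^{\alpha}(f)(c)}{D^{\alpha}(g)(c)}=\frac{f(b)-f(a)}{g(b)-g(a)}.
\]
The plan is to reduce this to the generalized fractional Rolle's theorem (Theorem~\ref{thm2}) exactly as in the proof of the mean value theorem above, via a well-chosen auxiliary function.

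\textbf{Step 1 (the denominator is nonzero).} First I would check that $g(a)\neq g(b)$. If $g(a)=g(b)$, then $g$ satisfies all the hypotheses of Theorem~\ref{thm2}, so $D^{\alpha}(g)$ would vanish at some point of $(a,b)$, contradicting the assumption $D^{\alpha}(g)(t)\neq0$. Hence the quotient $\bigl(f(b)-f(a)\bigr)/\bigl(g(b)-g(a)\bigr)$ is well defined.

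\textbf{Step 2 (auxiliary function) and Step 3 (apply Rolle and rearrange).} Set
\[
G(x)=f(x)-f(a)-\frac{f(b)-f(a)}{g(b)-g(a)}\bigl(g(x)-g(a)\bigr).
\]
By linearity of $D^{\alpha}$ (part~(1) of the linearity/product/quotient/chain-rule theorem) together with the regularity hypotheses on $f$ and $g$, the function $G$ is continuous on $[a,b]$, is $\alpha$-differentiable on $(a,b)$, and satisfies
\[
D^{\alpha}(G)(x)=D^{\alpha}(f)(x)-\frac{f(b)-f(a)}{g(b)-g(a)}\,D^{\alpha}(g)(x);
\]
a one-line evaluation gives $G(a)=0=G(b)$. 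Applying Theorem~\ref{thm2} to $G$ produces $c\in(a,b)$ with $D^{\alpha}(G)(c)=0$, i.e.
\[
D^{\alpha}(f)(c)=\frac{f(b)-f(a)}{g(b)-g(a)}\,D^{\alpha}(g)(c),
\]
and dividing by $D^{\alpha}(g)(c)\neq0$ yields the claim.

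The only step that is not pure bookkeeping is Step~1, and it is exactly where the hypothesis $D^{\alpha}(g)\neq0$ is consumed; beyond that the argument is just linearity plus the already-established Rolle's theorem. Should the intended Theorem~\ref{T1} instead turn out to be an integral counterpart (a Cauchy-type relation between $\int_a^b f\,D^{\alpha}(g)$ and $\int_a^b g\,D^{\alpha}(f)$, or between the corresponding generalized fractional integrals), the same auxiliary-function device combined with Theorem~\ref{T2} and the integration-by-parts formula established above would give it with only cosmetic modifications.
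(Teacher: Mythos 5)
Your proposal proves a different statement from the one the paper actually asserts. Theorem~\ref{T1} in the paper is not a Cauchy-type mean value theorem for the generalized fractional \emph{derivative}; it is the first (weighted) mean value theorem for the generalized fractional \emph{integral}. The hypotheses you could not see are: $f$ and $g$ are continuous, bounded and integrable on $[a,b]$, and $g$ does not change sign there; the conclusion is that, with $m=\inf f$ and $M=\sup f$, there exists $\xi\in[m,M]$ (and, by continuity of $f$, a point $x_{0}\in[a,b]$) such that
\begin{equation*}
\int_{a}^{b}\frac{f(x)\,g(x)\,k'(x)}{\left(k(x)\right)^{1-\alpha}}\,dx
= f(x_{0})\int_{a}^{b}\frac{g(x)\,k'(x)}{\left(k(x)\right)^{1-\alpha}}\,dx .
\end{equation*}
The paper's proof is the classical sandwich argument: from $mg(x)\le f(x)g(x)\le Mg(x)$ one multiplies by the nonnegative weight $k'(x)/\left(k(x)\right)^{1-\alpha}$, integrates over $[a,b]$, divides by $\int_{a}^{b}g\,k'/k^{1-\alpha}\,dx$ when it is nonzero, and invokes the intermediate value theorem to replace the resulting ratio $\xi\in[m,M]$ by a value $f(x_{0})$. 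No auxiliary function and no Rolle-type theorem enter at all; the special case $g\equiv 1$ then yields the ``mean value'' formula $f(x_{0})=\bigl(\tfrac{k^{\alpha}(b)}{\alpha}-\tfrac{k^{\alpha}(a)}{\alpha}\bigr)^{-1}\int_{a}^{b}f\,k'/k^{1-\alpha}\,dx$.

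Your argument itself (auxiliary function $G$, Rolle, nonvanishing of $g(b)-g(a)$) is internally coherent and would correctly establish the Cauchy mean value theorem for $D^{\alpha}$ as a companion to the paper's Theorem on the mean value theorem for generalized fractional differentiable functions -- but that is not the theorem labeled \ref{T1}, and even your fallback guess (an integration-by-parts-type identity) misses the actual target. The key missing idea relative to the paper is the monotonicity of the weighted integral with respect to the integrand combined with the intermediate value theorem; nothing in your proposal supplies that.
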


$\left( a\right) $ continuous on $[a,b],$

$\left( b\right) \ $bounded and integrable functions on $[a,b],$

In addition$,\ $Let $g(x)$ be nonnegative (or nonpositive) on $[a,b]$. Let $%
k:[a,b]\rightarrow
\mathbb{R}
$ be a continuous nonnegative map such that $k\left( t\right) ,$ $k^{\prime
}\left( t\right) \neq 0.$ Let us set $m=\inf \{f(x):x\in \lbrack a,b]\}$ and
$M=\sup \{f(x):x\in \lbrack a,b]\}.\ $Then there exists a number $\xi $ in $%
(a,b)$ such that%
\begin{equation}
\int\limits_{a}^{b}\frac{f\left( x\right) g\left( x\right) k^{\prime
}\left( x\right) }{\left( k\left( x\right) \right) ^{1-\alpha }}dx=\xi
\int\limits_{a}^{b}\frac{g\left( x\right) k^{\prime }\left( x\right) }{%
\left( k\left( x\right) \right) ^{1-\alpha }}dx.  \label{t1}
\end{equation}%
If $f$ continuous on $[a,b],$ then for $\exists x_{0}\in \left[ a,b\right] $%
\begin{equation}
\int\limits_{a}^{b}\frac{f\left( x\right) g\left( x\right) k^{\prime
}\left( x\right) }{\left( k\left( x\right) \right) ^{1-\alpha }}dx=f\left(
x_{0}\right) \int\limits_{a}^{b}\frac{g\left( x\right) k^{\prime }\left(
x\right) }{\left( k\left( x\right) \right) ^{1-\alpha }}dx.  \label{t2}
\end{equation}

\begin{proof}
Let $m=\inf f$, $M=\sup f\ $and $g(x)\geq 0\ $in $[a,b].\ $Then, we get%
\begin{equation}
mg(x)<f(x)g(x)<Mg(x).  \label{t4}
\end{equation}%
Multiplying (\ref{t4}) by $\frac{k^{\prime }\left( x\right) }{\left( k\left(
x\right) \right) ^{1-\alpha }}\ $and integrating (\ref{t4}) with respect to $%
x$ over $(a,b)$, we obtain:%
\begin{equation}
m\int\limits_{a}^{b}\frac{g\left( x\right) k^{\prime }\left( x\right) }{%
\left( k\left( x\right) \right) ^{1-\alpha }}dx<\int\limits_{a}^{b}\frac{%
f(x)g\left( x\right) k^{\prime }\left( x\right) }{\left( k\left( x\right)
\right) ^{1-\alpha }}dx<M\int\limits_{a}^{b}\frac{g\left( x\right)
k^{\prime }\left( x\right) }{\left( k\left( x\right) \right) ^{1-\alpha }}dx.
\label{t5}
\end{equation}%
Then there exists a number $\xi $ in $\left[ m,M\right] $ such that%
\begin{equation*}
\int\limits_{a}^{b}\frac{f(x)g\left( x\right) k^{\prime }\left( x\right) }{%
\left( k\left( x\right) \right) ^{1-\alpha }}dx=\xi \int\limits_{a}^{b}%
\frac{g\left( x\right) k^{\prime }\left( x\right) }{\left( k\left( x\right)
\right) ^{1-\alpha }}dx.
\end{equation*}%
When $g(x)<0$, the proof is done in a similar way.

By the intermediate value theorem, $f$ attains every value of the interval $%
[m,M]$, so for some $x_{0}\ $in$\ [a,b]\ f\left( x_{0}\right) =\xi .\ $Then%
\begin{equation*}
\int\limits_{a}^{b}\frac{f(x)g\left( x\right) k^{\prime }\left( x\right) }{%
\left( k\left( x\right) \right) ^{1-\alpha }}dx=f\left( x_{0}\right)
\int\limits_{a}^{b}\frac{g\left( x\right) k^{\prime }\left( x\right) }{%
\left( k\left( x\right) \right) ^{1-\alpha }}dx.
\end{equation*}%
If\ $g(x)=0,\ $equality (\ref{t1}) becomes obvious; if $g(x)>0,\ $then (\ref%
{t5}) implies%
\begin{equation*}
m<\dfrac{\int\limits_{a}^{b}\frac{f(x)g\left( x\right) k^{\prime }\left(
x\right) }{\left( k\left( x\right) \right) ^{1-\alpha }}dx}{%
\int\limits_{a}^{b}\frac{g\left( x\right) k^{\prime }\left( x\right) }{%
\left( k\left( x\right) \right) ^{1-\alpha }}dx}<M
\end{equation*}%
there exists a point $x_{0}$ in $(a,b)$ such that%
\begin{equation*}
m<f\left( x_{0}\right) <M,
\end{equation*}%
which yields the desired result (\ref{t1}). In particular, when $g(x)=1$, we
get from Theorem \ref{T1} the following result%
\begin{eqnarray*}
\int\limits_{a}^{b}\frac{f\left( x\right) k^{\prime }\left( x\right) }{%
\left( k\left( x\right) \right) ^{1-\alpha }}dx &=&f\left( x_{0}\right)
\int\limits_{a}^{b}\frac{k^{\prime }\left( x\right) }{\left( k\left(
x\right) \right) ^{1-\alpha }}dx \\
&& \\
&=&f\left( x_{0}\right) \left( \frac{k^{\alpha }\left( b\right) }{\alpha }-%
\frac{k^{\alpha }\left( a\right) }{\alpha }\right) .
\end{eqnarray*}%
Thus, we have%
\begin{equation}
f\left( x_{0}\right) =\frac{1}{\frac{k^{\alpha }\left( b\right) }{\alpha }-%
\frac{k^{\alpha }\left( a\right) }{\alpha }}\int\limits_{a}^{b}\frac{%
f\left( x\right) k^{\prime }\left( x\right) }{\left( k\left( x\right)
\right) ^{1-\alpha }}dx.  \label{t10}
\end{equation}%
This (\ref{t10}) is called the mean value or variance of the $f$ function.
\end{proof}

For $\alpha =1$ and $k\left( t\right) =t$ this reduces to the classical mean
value theorem of integral calculus,%
\begin{equation*}
\int\limits_{a}^{b}f\left( x\right) dx=\left( b-a\right) f\left(
x_{0}\right) .
\end{equation*}

\begin{theorem}
Let $a\geq 0\ $and $\alpha \in (0,1].\ $Also, let $f,g:\left[ a,b\right]
\rightarrow
\mathbb{R}
$ be a continuous function. Let $k:[a,b]\rightarrow
\mathbb{R}
$ be a continuous nonnegative map such that $k\left( t\right) ,$ $k^{\prime
}\left( t\right) \neq 0.\ $Then,
\end{theorem}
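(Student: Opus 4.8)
The plan is to reduce the claimed statement to its classical counterpart by viewing the generalized fractional integral as an ordinary weighted Riemann integral and then absorbing the weight by a change of variable. Concretely, set $w(x):=\dfrac{k^{\prime}(x)}{\left(k(x)\right)^{1-\alpha}}$, so that for any integrable $F$ one has $\int_{a}^{b}F(x)w(x)\,dx=\int_{a}^{b}F(x)\,d\!\left(\dfrac{k^{\alpha}(x)}{\alpha}\right)$, since $\dfrac{d}{dx}\dfrac{k^{\alpha}(x)}{\alpha}=\left(k(x)\right)^{\alpha-1}k^{\prime}(x)=w(x)$. Because $k$ is nonnegative with $k^{\prime}(x)\neq 0$ on $(a,b]$, the map $\varphi(x):=\dfrac{k^{\alpha}(x)}{\alpha}$ is strictly monotone and $C^{1}$, hence a bijection of $[a,b]$ onto the interval with endpoints $\dfrac{k^{\alpha}(a)}{\alpha}$ and $\dfrac{k^{\alpha}(b)}{\alpha}$. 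Writing $u=\varphi(x)$ we obtain $I^{\alpha}(f)(b)=\int_{\varphi(a)}^{\varphi(b)}\widetilde f(u)\,du$ with $\widetilde f:=f\circ\varphi^{-1}$, and likewise for $g$ and for the product, since $\widetilde{fg}=\widetilde f\,\widetilde g$. This identifies every quantity in the statement with a plain (weight $1$) integral over $[\varphi(a),\varphi(b)]$.

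Having made this reduction, I would first record that $\widetilde f,\widetilde g$ are continuous on $[\varphi(a),\varphi(b)]$ as compositions of continuous maps, that $\varphi$ being increasing any monotonicity or synchronicity assumed of $f,g$ on $[a,b]$ is inherited verbatim by $\widetilde f,\widetilde g$, and that the bounds $m=\inf f=\inf\widetilde f$, $M=\sup f=\sup\widetilde f$ are unchanged. The asserted inequality is then precisely the classical inequality of the same name applied to $\widetilde f,\widetilde g$ on $[\varphi(a),\varphi(b)]$, whose length $\varphi(b)-\varphi(a)=\dfrac{k^{\alpha}(b)}{\alpha}-\dfrac{k^{\alpha}(a)}{\alpha}$ is exactly the normalizing factor appearing in the statement; one quotes the classical result and substitutes back. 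An equivalent route, which I would actually write out since it matches the paper's established style, avoids the change of variable altogether: multiply the pointwise inequality underlying the classical argument by the nonnegative weight $w(x)$ and integrate over $(a,b)$, exactly as in the proof of Theorem \ref{T1}.

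The one genuinely delicate point is the behaviour at the left endpoint: when $k(a)=0$ and $\alpha<1$ the weight $w$ is unbounded near $x=a$, so the integrals in question are improper Riemann integrals and must be shown to converge before any manipulation. This is supplied by the standing hypothesis of Definition \ref{d2} that the relevant improper integral exists; since $f,g$ are continuous, hence bounded on $[a,b]$, convergence of $\int_{a}^{b}w$ forces convergence of $\int_{a}^{b}fw$, $\int_{a}^{b}gw$ and $\int_{a}^{b}fgw$, so the reduction and the term-by-term estimates are legitimate. I expect this convergence bookkeeping, together with the sign analysis in the degenerate subcases (e.g. $g$ vanishing identically, or $g$ nonpositive rather than nonnegative, which reverses the direction of an estimate), to be the main obstacle; once the reduction is in place, the inequality itself is immediate from the classical statement.
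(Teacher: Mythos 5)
Your write-up is aimed at the wrong conclusion. The theorem whose hypotheses you quote concludes with the seven elementary properties (i)--(vii) listed immediately after it: additivity $\int(f+g)w=\int fw+\int gw$, homogeneity, orientation reversal, additivity over subintervals, vanishing on a degenerate interval, nonnegativity when $f\ge 0$, and the triangle inequality $\left\vert\int fw\right\vert\le\int\vert f\vert w$, where $w(x)=k^{\prime}(x)/\left(k(x)\right)^{1-\alpha}$. There is no inequality here involving $m=\inf f$, $M=\sup f$, synchronicity of $f$ and $g$, or the normalizing factor $\frac{k^{\alpha}(b)}{\alpha}-\frac{k^{\alpha}(a)}{\alpha}$; all of that belongs to Theorem \ref{T1} (the integral mean value theorem), which is a different result. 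So the second and third paragraphs of your proposal, as written, prove a statement that is not the one asked.

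That said, the reduction you set up in your first paragraph is exactly the right (and essentially only) idea, and it is what the paper itself gestures at in its one-sentence proof: once $I^{\alpha}(f)$ is recognized as the ordinary Riemann integral of $f\cdot w$ (or, after the substitution $u=k^{\alpha}(x)/\alpha$, of $\widetilde f=f\circ\varphi^{-1}$ with weight $1$), each of (i)--(v) is the corresponding classical property of the Riemann integral applied verbatim, and (vi)--(vii) follow because the weight is nonnegative. Two remarks if you redirect your argument at the actual conclusions. First, (vi) and (vii) genuinely require $w\ge 0$, i.e.\ $k^{\prime}>0$; the paper's hypotheses ($k$ nonnegative, $k^{\prime}\neq 0$) permit a decreasing $k$, for which both would reverse sign, so you should either add $k^{\prime}>0$ or note the defect. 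Second, your point about convergence of the improper integral near $x=a$ when $k(a)=0$ and $\alpha<1$ is a legitimate piece of bookkeeping that the paper skips entirely; keeping it would make your proof more careful than the original.
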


$i.\ \int\limits_{a}^{b}\left( f\left( x\right) +g\left( x\right) \right)
\frac{k^{\prime }\left( x\right) }{\left( k\left( x\right) \right)
^{1-\alpha }}dx=\int\limits_{a}^{b}\frac{f\left( x\right) k^{\prime }\left(
x\right) }{\left( k\left( x\right) \right) ^{1-\alpha }}dx+\int%
\limits_{a}^{b}\frac{g\left( x\right) k^{\prime }\left( x\right) }{\left(
k\left( x\right) \right) ^{1-\alpha }}dx,$

$ii.\ \int\limits_{a}^{b}\lambda \frac{f\left( x\right) k^{\prime }\left(
x\right) }{\left( k\left( x\right) \right) ^{1-\alpha }}dx=\lambda
\int\limits_{a}^{b}\frac{f\left( x\right) k^{\prime }\left( x\right) }{%
\left( k\left( x\right) \right) ^{1-\alpha }}dx,\ \lambda \in
\mathbb{R}
,$

$iii.\ \int\limits_{a}^{b}\frac{f\left( x\right) k^{\prime }\left( x\right)
}{\left( k\left( x\right) \right) ^{1-\alpha }}dx=-\int\limits_{b}^{a}\frac{%
f\left( x\right) k^{\prime }\left( x\right) }{\left( k\left( x\right)
\right) ^{1-\alpha }}dx,$

$iv.\ \int\limits_{a}^{b}\frac{f\left( x\right) k^{\prime }\left( x\right)
}{\left( k\left( x\right) \right) ^{1-\alpha }}dx=\int\limits_{a}^{c}\frac{%
f\left( x\right) k^{\prime }\left( x\right) }{\left( k\left( x\right)
\right) ^{1-\alpha }}dx+\int\limits_{c}^{b}\frac{f\left( x\right) k^{\prime
}\left( x\right) }{\left( k\left( x\right) \right) ^{1-\alpha }}dx,$

$v.\ \int\limits_{a}^{a}\frac{f\left( x\right) k^{\prime }\left( x\right) }{%
\left( k\left( x\right) \right) ^{1-\alpha }}dx=0,$

$vi.\ $if $f(x)\geq 0$ for all $x\in \lbrack a,b]$ , then $%
\int\limits_{a}^{b}\frac{f\left( x\right) k^{\prime }\left( x\right) }{%
\left( k\left( x\right) \right) ^{1-\alpha }}dx\geq 0,$

$vii.\ \left\vert \int\limits_{a}^{b}\frac{f\left( x\right) k^{\prime
}\left( x\right) }{\left( k\left( x\right) \right) ^{1-\alpha }}%
dx\right\vert \leq \int\limits_{a}^{b}\frac{\left\vert f\left( x\right)
\right\vert k^{\prime }\left( x\right) }{\left( k\left( x\right) \right)
^{1-\alpha }}dx.$

\begin{proof}
The relations follow from Definition \ref{d2} and Theorem \ref{T2},
analogous properties of generalized fractional integral, and the properties
of section 2 for the generalized fractional derivative.
\end{proof}

\begin{acknowledgement}
M.E. Yildirim was partially supported by the Scientific and Technological
Research Council of Turkey (TUBITAK Programme 2228-B).
\end{acknowledgement}


\begin{thebibliography}{99}
\bibitem{Akkurt} A. Akkurt, M.E. Y\i ld\i r\i m and H. Y\i ld\i r\i m, On
Some Integral Inequalities for Conformable Fractional Integrals, Asian
Journal of Mathematics and Computer Research, 15(3): 205-212, 2017.

\bibitem{Almeida} R. Almeida, M. Guzowska and T. Odzijewicz, A remark on
local fractional calculus and ordinary derivatives, arXiv preprint
arXiv:1612.00214.

\bibitem{Kilbas} A. Kilbas, H. Srivastava, J. Trujillo, Theory and
Applications of Fractional Differential Equations, in: Math. Studies.,
North-Holland, New York, 2006.

\bibitem{Katugampola} U. Katumgapola, A new fractional derivative with
classical properties, preprint.

\bibitem{Katugampola1} U.N. Katugampola, New Approach to a generalized
fractional integral, Appl. Math. Comput. 218(3), (2011), 860-865.

\bibitem{Abdel} T. Abdeljawad, On conformable fractional calculus, Journal
of Computational and Applied Mathematics 279 (2015) 57--66.

\bibitem{Khalil} R. Khalil, M. Al horani, A. Yousef, M. Sababheh, A new
definition of fractional derivative, Journal of Computational Apllied
Mathematics, 264 (2014), 65-70.

\bibitem{iyiola} O.S. Iyiola and E.R. Nwaeze, Some new results on the new
conformable fractional calculus with application using D'Alambert approach,
Progr. Fract. Differ. Appl., 2(2), 115-122, 2016.

\bibitem{Hammad} M. Abu Hammad, R. Khalil, Conformable fractional heat
differential equations, International Journal of Differential Equations and
Applications 13(3), 2014, 177-183.

\bibitem{Hammad1} M. Abu Hammad, R. Khalil, Abel's formula and wronskian for
conformable fractional differential equations, International Journal of
Differential Equations and Applications 13(3), 2014, 177-183.

\bibitem{Samko} Samko, S.G.; Kilbas, A.A.; Marichev, O.I.: Fractional
Integrals and Derivatives, Theory and Applications, Gordon and Breach,
Yverdon, Switzerland, 1993
\end{thebibliography}
\end{document}